\newtheorem{thm}{Theorem}[section]
\newtheorem{cor}[thm]{Corollary}
\newtheorem{defn}[thm]{Definition}
\newtheorem{lem}[thm]{Lemma}
\newtheorem{prop}[thm]{Proposition}
\def\ni{\noindent}
\def\N{\mathbb{N}}
\title{\textbf{\sc A study on the Primitive Holes of Certain Graphs}}
\author{Johan Kok}
\affil{\small Tshwane Metropolitan Police Department\\ City of Tshwane, Republic of South Africa\\ E-mail: kokkiek2@tshwane.gov.za}
\author{N. K. Sudev}
\affil{\small Department of Mathematics\\ Vidya Academy of Science \& Technology \\ Thalakkottukara, Thrissur - 680501, India.\\ E-mail: sudevnk@gmail.com}
\date{}
\begin{document}
\maketitle

\begin{abstract}
A hole of a simple connected graph $G$ is a chordless cycle $C_n$, where $n\in \N,n\ge 4$, in the graph $G$. The girth of a simple connected graph $G$ is the smallest cycle in $G$, if any such cycle exists. It can be observed that all such smallest cycles are necessarily chordless. We call the cycle $C_3$ in a given graph $G$ a primitive hole of that graph. We introduce the notion of the primitive hole number of a graph as the number of primitive holes present in that graph. In this paper, we determine the primitive hole number of certain standard graphs. Also, we determine the primitive hole number of the underlying graph of a Jaco graph, $J_{n+1}^{\ast}(1)$, where $n\in N,n\ge 4$ recursively in terms of the underlying Jaco graph $J_n(1)$, with prime Jaconian vertex $v_i$.  The notion of primitive degree of the vertices of a graph is also introduced and the primitive degree of the vertices of certain graphs is also determined in this paper.
\end{abstract}

\ni \textbf{Key Words:} Jaco graph, primitive hole, primitive hole number, girth of a graph, primitive degree of a vertex. 
\vspace{0.2cm}

\ni \textbf{Mathematics Subject Classification:} 05C07, 05C20, 05C38.

\section{Introduction}

For general notations and concepts in graph theory, we refer to \cite{BM}, \cite{FH} and \cite{DBW} and for digraph theory, we further refer to \cite{CL1} and \cite{JG1}. All graphs mentioned in this paper are simple, connected and finite graphs, unless mentioned otherwise.

A \textit{hole} of a simple connected graph $G$ is a chordless cycle $C_n$ , where $n \in  N, n\ge 4$, in $G$. The \textit{girth} of a simple connected graph $G$, denoted by $g(G)$, is the order of the smallest cycle in $G$. It is to be noted that such smallest cycles are necessarily chordless. In this paper, contrary to the usual conventions, we propose that the girth of an acyclic graph is $0$, which enables us to explore evolutionary \textit{hole growth} like, a hole $C_k$ may grow over time units $t \in X\subseteq \N$ over a integer valued function $x(t) = j$ to attain $j$ additional cyclic vertices at $t$. Hence $g(\lim\limits_{t\to \infty}(C_{k+x(t)})) = \infty$. It also implies that for simple connected graphs $G_1, G_2, G_3,\ldots, G_n$, $g(\bigcup\limits _{i=1}^{n} G_i) = \sum\limits_{i=1}^{n}g(G_i)$ and allows quite naturally that, $g(\bigcup\limits _{i=1}^{\infty} G_i) = \sum\limits_{i=1}^{\infty}g(G_i) = \infty$.  These conventions reconcile the inherent conflict between the definitions of a girth and hole.

\section{Primitive Hole Number of Graphs}

In this section, we introduce the notion of primitive holes and the primitive hole number of a given graph as follows.

\begin{defn}{\rm 
A \textit{primitive hole} of a graph $G$ is a triangle $C_3$ in it. The \textit{primitive hole number} of a simple connected graph $G$, denoted $h(G)$, is the number of {\em primitive holes} in $G$.}
\end{defn} 

If a simple connected graph $G$ has no primitive hole, then we say that $h(G)=0$. Hence, it follows that for simple connected graphs $G_1, G_2, G_3, \ldots, G_n$, $h(\bigcup\limits _{i=1}^{n} G_i) = \sum\limits_{i=1}^{n}h(G_i)$.
\\

\ni In view of the above definitions, we can establish the following theorem.

\begin{prop}\label{P-PHN-KN}
The number of primitive holes in a complete graph $K_n$ is $\binom{n}{3}$.
\end {prop}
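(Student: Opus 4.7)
The plan is to exploit the defining property of $K_n$, namely that every pair of distinct vertices is joined by an edge, and to set up a bijection between primitive holes and $3$-element subsets of the vertex set $V(K_n)$.

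First I would observe that a primitive hole in any simple graph $G$ is uniquely determined by its vertex set: given three mutually adjacent vertices $\{u,v,w\}$, there is exactly one triangle on them (since $G$ is simple). Conversely, any triangle gives rise to a unique $3$-element vertex subset. Hence counting primitive holes in $G$ is the same as counting $3$-element subsets $\{u,v,w\} \subseteq V(G)$ that induce a triangle, i.e.\ such that all three edges $uv$, $vw$, $uw$ are present in $G$.

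Next I would specialise to $G = K_n$. Since every two distinct vertices of $K_n$ are adjacent, \emph{every} $3$-element subset of $V(K_n)$ induces a triangle. Therefore the number of primitive holes equals the total number of $3$-element subsets of an $n$-element set, which is $\binom{n}{3}$, giving $h(K_n) = \binom{n}{3}$.

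There is no real obstacle here; the only thing to be careful about is the convention that primitive holes are counted as unordered triangles (so we avoid overcounting by a factor of $3!$ that would arise if we counted ordered triples). This is automatic once we phrase the bijection in terms of vertex subsets rather than sequences.
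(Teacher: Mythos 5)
Your proposal is correct and follows essentially the same route as the paper's proof: both identify primitive holes in $K_n$ with $3$-element vertex subsets and conclude $h(K_n)=\binom{n}{3}$. Your version is slightly more careful in spelling out that the correspondence is a bijection (a triangle is determined by its vertex set in a simple graph), but the underlying argument is the same.
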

\begin{proof} 
A primitive hole of any given graph $G$ is a triangle $K_3$. Hence, $h(K_n)$ is the the number of distinct triangles in $K_n$. It can be noticed that every distinct triplet of vertices in $K_n$ forms a triangle in $K_n$. Therefore, the number of primitive holes in $K_n$ is equal to the number of ways in which three vertices can be chosen from the given set of $n$ vertices. Hence, $h(K_n)=\binom{n}{3}$.
\end{proof}

The relevance of the observation is that the identification algorithm might assist in determining $h(G)$ in general.

The following result establishes a relation between the primitive hole numbers of a given graph and its subgraphs.

\begin{prop}\label{P-PHN-SG}
For any subgraph $H$ of a given graph $G$, $h(H)\le h(G)$. 
\end{prop}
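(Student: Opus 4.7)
The plan is to argue by set inclusion: show that every primitive hole of $H$ is already a primitive hole of $G$, so the set of triangles of $H$ embeds into the set of triangles of $G$, and the inequality on cardinalities follows immediately.

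First I would recall the definition of subgraph, namely that $V(H)\subseteq V(G)$ and $E(H)\subseteq E(G)$. Then, given an arbitrary primitive hole of $H$ --- a triangle on vertices $u,v,w\in V(H)$ with $uv, vw, uw\in E(H)$ --- I would invoke these containments to conclude that $u,v,w\in V(G)$ and the three edges $uv,vw,uw$ lie in $E(G)$, so the same three vertices induce a $C_3$ in $G$. This gives an injection from the collection of primitive holes of $H$ into the collection of primitive holes of $G$.

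Taking cardinalities of both sides of this inclusion yields $h(H)\le h(G)$, completing the argument. There is no substantive obstacle here; the only thing to be careful about is making the injection genuinely well-defined, which is immediate because a triangle is determined by its unordered vertex set and the vertex sets of $H$ and $G$ share identity. The case $h(H)=0$ is covered automatically, since $0\le h(G)$ regardless of the value of $h(G)$.
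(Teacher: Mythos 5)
Your proof is correct, and it takes a genuinely different and more direct route than the paper's. The paper argues ``downward'' from $G$ to $H$: it splits into cases according to whether $H$ arises by deleting a vertex or by deleting an edge, and in each case tracks whether the deleted element destroys a triangle or leaves the count unchanged. That argument requires (implicitly) an induction on the number of deleted vertices and edges, and its bookkeeping is delicate --- one must be careful that each deletion can only decrease or preserve the triangle count, and the paper's Case-1 in particular conflates several subcases. Your argument goes ``upward'' from $H$ to $G$: since $V(H)\subseteq V(G)$ and $E(H)\subseteq E(G)$, any triangle $\{u,v,w\}$ of $H$ has all three vertices and all three edges present in $G$, so the identity map on vertex triples is a well-defined injection from the primitive holes of $H$ into those of $G$, and the cardinality inequality is immediate. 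This avoids the case analysis entirely, needs no induction on deletions, and handles arbitrary subgraphs (not just those reachable by single deletions) in one step; what the paper's approach buys, by contrast, is an explicit description of \emph{which} triangles are lost at each deletion, which is not needed for the stated inequality.
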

\begin{proof}
Let $G$ be a given graph and $H$ be its non-trivial subgraph. Then, we have either $V(H)\subseteq V(G)$ and/or $E(H)\subseteq E(G)$. Then we have the following cases. 

\ni {\em Case-1:} First assume that $V(H)$ is a non-trivial subset of $V(G)$. Then, there exists some vertex, say $v$, in $V(G)$ but not in $V(H)$. If $v$ is a pendant vertex of $G$, then it is not in any triangle of $G$ and hence the number of triangles in $G-v$ and $G$ are the same. If $v$ is not a pendant vertex of $G$, then $v$ must be adjacent to at least two vertices in $G$. Let $u$ and $w$ be two vertices that are adjacent to $v$ in $G$. If $u$ and $v$ are adjacent vertices in $G$, then the vertices $u,v,w$ form a triangle in $G$ and this triangle will be missing in $G-v$. If $u$ and $v$ are non-adjacent in $G$, then also $G$ and $G-v$ contain same number of primitive holes.

\vspace{0.2cm}

\ni {\em Case-2:} If $H$ is a spanning subgraph of $G$, then $V(H)=V(G)$. In this case, $E(H)$ is a non-trivial subset of $E(G)$. Then, there exists some edge in $G$ that is not in $H$. Let $e$ be an edge in $G$ that is not in $H$. If this edge is in a triangle of $G$, then as explained in Case-1, that triangle will be missing in $H$. Otherwise, the number of triangles in $G$ and $G-e$ are the same. 

\ni Invoking all the above cases, we have $h(H)\le h(G)$. 
\end{proof}

\ni Invoking the above results, we have the following theorem.  

\begin{thm}
For any simple, connected graph $G$ on $n$ vertices, $0\le h(G)\le \binom{n}{3}$.
\end{thm}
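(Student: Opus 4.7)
The plan is to prove this as an immediate corollary of the two preceding propositions, with the lower bound being essentially definitional. For the lower bound $0 \le h(G)$, I would simply note that $h(G)$ counts triangles in $G$, which is a non-negative integer by construction; the convention that $h(G) = 0$ when $G$ is triangle-free (stated just before Proposition \ref{P-PHN-KN}) takes care of the boundary case.

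For the upper bound $h(G) \le \binom{n}{3}$, the key observation is that any simple graph $G$ on $n$ vertices is a subgraph of the complete graph $K_n$ on the same vertex set. So I would first invoke Proposition \ref{P-PHN-SG} to conclude $h(G) \le h(K_n)$, and then apply Proposition \ref{P-PHN-KN} to get $h(K_n) = \binom{n}{3}$. Chaining the two inequalities gives the desired bound.

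There is essentially no main obstacle here, since both endpoints of the inequality are already established by the preceding results; the only care needed is to state clearly that $G$ embeds as a (spanning) subgraph of $K_n$ in order to apply Proposition \ref{P-PHN-SG}. I might also briefly remark that both bounds are tight: the empty graph on $n$ vertices (or any triangle-free graph, such as a tree or a cycle $C_n$ with $n \ge 4$) realizes $h(G) = 0$, while $G = K_n$ realizes $h(G) = \binom{n}{3}$, showing that the stated range cannot be tightened without further hypotheses on $G$.
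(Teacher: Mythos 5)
Your proposal matches the paper's proof exactly: the paper also derives the theorem as an immediate consequence of Proposition \ref{P-PHN-KN} and Proposition \ref{P-PHN-SG}, with the upper bound coming from $G$ being a subgraph of $K_n$. Your version simply spells out the chaining of the two results and adds tightness remarks, which the paper omits.
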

\begin{proof}
The result follows as an immediate consequence of Proposition \ref{P-PHN-KN} and Proposition \ref{P-PHN-SG}.
\end{proof}

For a given graph $G$, either if $G$ is an acyclic graph or a $g(G)\ge 4$, then $h(G)=0$. Then, we need only consider the graphs whose girth is $3$. If $g(G)=3$, then the graph $G$ contains at least one primitive hole. The following lemma explains a relation between the size of a graph $G$ and the number of primitive holes in $G$.

\begin{lem}
For any simple connected graph $G$ we have that, $h(G)\le|E(G)|$.
\end{lem}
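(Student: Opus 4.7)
My approach is induction on $m = |E(G)|$. The base case $m = 0$ is immediate since $G$ has no triangles. For the inductive step, remove an edge $e = uv$ and put $G' = G - e$. Any primitive hole of $G$ either lies entirely in $G'$ or uses $e$; in the latter case its third vertex is a common neighbour of $u$ and $v$ in $G$, so
\begin{equation*}
h(G) \;=\; h(G') \,+\, \bigl|N_G(u) \cap N_G(v)\bigr|.
\end{equation*}
Applying the induction hypothesis to $G'$ (componentwise if $G'$ becomes disconnected, using the additivity of $h$ over disjoint unions noted after Definition~2.1) gives $h(G') \le m - 1$. The step therefore closes provided one can always choose $e$ so that $|N_G(u) \cap N_G(v)| \le 1$.

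The main obstacle is precisely this choice of $e$. Such an edge is not guaranteed in a dense graph: every edge of $K_n$ with $n \ge 4$ has $n-2 \ge 2$ common endpoint-neighbours, so the naive induction does not close uniformly. A natural refinement is to delete an entire vertex $w$ of minimum degree $\delta$ rather than a single edge; this removes $\delta$ edges and destroys at most $\binom{\delta}{2}$ primitive holes, so the induction goes through whenever $\binom{\delta}{2} \le \delta$, that is when $\delta \le 3$. The regime $\delta \ge 4$ requires a finer argument, and this is where I expect the substantive work of the proof to lie.

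As a fallback, one could try to build an explicit injection $\phi$ from the set of primitive holes of $G$ into $E(G)$, for example by fixing a linear order on $V(G)$ and sending each triangle to a canonical one of its three edges (say the edge joining the two smallest-indexed vertices). Ruling out collisions between two triangles that would share the designated edge is the crux here: distinct triangles $\{u,v,w\}$ and $\{u,v,w'\}$ share an edge under the most obvious rules, so any workable injection must exploit the structure of $G$ beyond a mere vertex ordering. In summary, the edge-removal identity above is the routine engine of the proof; the real difficulty is controlling how many primitive holes can share a single edge.
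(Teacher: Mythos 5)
Your instinct that the dense regime is the real difficulty is correct, but the gap you flag at $\delta\ge 4$ is not a technical nuisance to be overcome by ``a finer argument'' --- it is fatal, because the lemma as stated is false there. For $K_6$ one has $h(K_6)=\binom{6}{3}=20$ while $|E(K_6)|=\binom{6}{2}=15$, and more generally $\binom{n}{3}>\binom{n}{2}$ for every $n\ge 6$, so $h(G)\le |E(G)|$ fails for all complete graphs on six or more vertices. This also explains why your fallback injection $\phi$ cannot be repaired: a single edge of $K_n$ lies in $n-2$ triangles, so once there are more triangles than edges no injection exists, by counting alone. The collision between $\{u,v,w\}$ and $\{u,v,w'\}$ that you identify as ``the crux'' is precisely the unavoidable obstruction.

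For comparison, the paper's own proof argues that since two distinct triangles share at most one edge, a graph with $t$ triangles must have at least $2t+1$ edges, and concludes $h(G)<|E(G)|$. That intermediate claim is already false for $K_4$ (four triangles, six edges, yet $2t+1=9$): a new triangle may share one edge with each of two \emph{different} existing triangles and so contribute only one new edge, which is exactly what happens in dense graphs. So the paper's proof is invalid and its conclusion is refuted by $K_6$. The parts of your proposal that are sound --- the identity $h(G)=h(G-e)+|N_G(u)\cap N_G(v)|$ and the minimum-degree deletion argument --- do yield a correct \emph{partial} result, namely that $h(G)\le|E(G)|$ whenever every subgraph of $G$ has a vertex of degree at most $3$ (since then each deletion removes $\delta$ edges and at most $\binom{\delta}{2}\le\delta$ triangles), but no argument can close the general case.
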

\begin{proof}
It is to be noted that any two distinct triangles in $G$ can have at most one edge in common. Hence, if $t$ is the number of triangles in $G$, then the minimum number of edges in $G$ must be $2t+1$. Since $t<2t+1$ for all positive integers $t$, we have $h(G)<|E(G)|$. 
\end{proof}

The \textit{line graph} (see \cite{GY}) of a given graph $G$, denoted by $L(G)$, has the edges of $G$ as vertices with two vertices in $L(G)$ are adjacent if, as the edges of $G$, they are adjacent in $G$. The following theorem establishes a relation between the primitive hole numbers of a graph and its line graph.

\begin{thm}\label{T-HNLG1}
For a given graph $G$ and its line graph $L(G)$, $h(G)\le h(L(G)).$
\end{thm}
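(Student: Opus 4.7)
My plan is to exhibit an injection from the set of triangles of $G$ into the set of triangles of $L(G)$, from which the inequality $h(G) \le h(L(G))$ follows immediately. Define a map $\phi$ as follows: if $T$ is a triangle in $G$ on vertices $u, v, w$, with edges $e_1 = uv$, $e_2 = vw$ and $e_3 = uw$, then let $\phi(T)$ be the subgraph of $L(G)$ induced by the three vertices $e_1, e_2, e_3$.

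The next step is to show that $\phi(T)$ is indeed a triangle in $L(G)$. By the definition of the line graph, two vertices of $L(G)$ are adjacent precisely when the corresponding edges of $G$ share an endpoint. Here $e_1$ and $e_2$ share $v$, $e_2$ and $e_3$ share $w$, and $e_1$ and $e_3$ share $u$, so all three pairs are adjacent in $L(G)$; hence $\{e_1, e_2, e_3\}$ spans a $C_3$ in $L(G)$, i.e.\ a primitive hole.

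Injectivity of $\phi$ is then essentially tautological: a triangle in $G$ is determined uniquely by its edge set, so if $\phi(T) = \phi(T')$ as vertex sets in $L(G)$, then $T$ and $T'$ have the same three edges in $G$ and are therefore the same triangle. Summing over all triangles of $G$ gives $h(G) \le h(L(G))$.

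I do not anticipate a genuine obstacle here; the only subtle point worth flagging in the write-up is that $\phi$ is typically far from surjective, because $L(G)$ can acquire extra triangles from any vertex of degree at least $3$ in $G$ (the three edges incident to a common vertex always form a triangle in $L(G)$ even when $G$ itself is triangle-free). This observation is consistent with the inequality being strict in many natural examples, but it is not needed for the proof; the bare injection suffices.
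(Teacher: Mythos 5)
Your proof is correct, and it takes a genuinely different (and cleaner) route than the paper. You construct an explicit injection $\phi$ from triangles of $G$ to triangles of $L(G)$: the three edges of a triangle in $G$ are pairwise adjacent as vertices of $L(G)$, so they span a $C_3$ there, and since a triangle in $G$ is determined by its edge set, $\phi$ is injective; the inequality follows for every graph $G$ in one stroke. The paper instead argues by cases on $\Delta(G)$: for $\Delta(G)\le 2$ it checks paths and cycles directly, and for trees with $\Delta(G)\ge 3$ it derives the stronger identity $h(L(G))=h(G)+\sum_{v\in V'}\binom{d(v)}{3}$, where $V'$ is the set of vertices of degree at least $3$ --- essentially the classical fact that every triangle of $L(G)$ comes either from a triangle of $G$ or from three edges sharing a common vertex. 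The paper's approach buys an exact count of $h(L(G))$ (at least in the cases it treats), but its case analysis does not actually cover an arbitrary graph with $\Delta(G)\ge 3$ that is not a tree; your injection argument sacrifices the exact count but is fully general and avoids that gap. Your closing remark correctly identifies why $\phi$ fails to be surjective (stars $K_{1,3}$ in $G$ create extra triangles in $L(G)$), which is precisely the $\binom{d(v)}{3}$ term the paper isolates.
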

\begin{proof}
Note that $L(G)$ contains a triangle if three or more edges are incident to a vertex in $G$. In this context, we need to verify the following possible cases.

\vspace{0.2cm}

\ni {\em Case-1:} First assume that $\Delta(G)\le 2$. Then, $G$ is either a path or a cycle. If $G\cong P_n$, a path on $n$ vertices, then $L(G)=P_{n-1}$. In this case, $h(G)=h(L(G))=0$. If $G\cong C_n$, then $L(G)\cong G=C_n$. If $n=3$, then both $G$ and $L(G)$ themselves are primitive holes and for $n\ge 4$, $G$ and $L(G)$ do not contain primitive holes. In all these cases, $h(G)=h(L(G))$.

\vspace{0.2cm}

\ni {\em Case-2:} If $G$ is a tree with $\Delta(G)\ge 3$, then one vertex, say $v$, in $G$ with $d(v)\ge 3$. Then, the vertices of $L(G)$ corresponding to the edges of $G$ incident on the vertex $V$ $G$ are mutually adjacent in $L(G)$ and hence it corresponds to a complete subgraph $K_{d(v)}$ of the graph $L(G)$. More over, every vertex with degree greater than or equal to $3$ contributes to the number of primitive holes in $L(G)$. Let $V'$ be the subset of $V(G)$ containing the vertices of degree greater than or equal to $3$. Clearly, $V'$ is non-empty. Then, for a vertex $v\in V'$, we have $d(v)=n\ge 3$ and hence $\{v\}\cup N(v)\cong K_{1,n}$ and the corresponding induced subgraph of $L(G)$ is $K_n$. Therefore, by Theorem \ref{P-PHN-KN}, this subgraph contains $\binom{n}{3}$ triangles. Hence, the total number of such triangles in $L(G)$ (that do not correspond to triangles in $G$) is $\sum\limits_{v\in V'}\binom{d(v)}{3}$.    
Therefore, in this case, the primitive holes in $L(G)$ is given by $h(L(G))= h(G)+\sum\limits_{v\in V'}\binom{d(v)}{3}$. Therefore, $h(L(G))\ge h(G)$.

\ni Invoking the above mentioned cases, we have $h(G)\le h(L(G))$.
\end{proof}

The \textit{total graph} $T(G)$ (see \cite{MB}) of a graph $G$ is that graph whose vertex set is $V(G)\cup E(G)$ and in which two vertices are adjacent if and only if they are adjacent or incident in $G$.

The following lemma provides a relation between the primitive hole numbers of a graph, its line graph and total graph.

\begin{lem}
For any given graph $G$, $h(G)\le h(L(G)) \le h(T(G))$.
\end{lem}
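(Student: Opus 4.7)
The plan is to reduce both inequalities in the lemma to the monotonicity of the primitive hole count established in Proposition~\ref{P-PHN-SG}, by exhibiting both $G$ and $L(G)$ as subgraphs of $T(G)$. The first inequality $h(G)\le h(L(G))$ is already Theorem~\ref{T-HNLG1} and needs no further work, so the only real content is the second inequality $h(L(G))\le h(T(G))$.

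First I would unpack the definition of $T(G)$ and ask what its induced subgraph on the vertex set $E(G)$ looks like. Since two vertices of $T(G)$ are adjacent iff they are adjacent or incident in $G$, two vertices that both come from $E(G)$ are joined in $T(G)$ precisely when the corresponding edges of $G$ share an endpoint, i.e.\ when they are adjacent in $G$. But this is exactly the adjacency rule defining $L(G)$. Hence the subgraph of $T(G)$ induced by $E(G)$ is isomorphic to $L(G)$, and in particular $L(G)$ is a subgraph of $T(G)$.

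Given this identification, I would simply apply Proposition~\ref{P-PHN-SG} with $H=L(G)$ and the ambient graph $T(G)$ to conclude $h(L(G))\le h(T(G))$. Chaining this with Theorem~\ref{T-HNLG1} yields $h(G)\le h(L(G))\le h(T(G))$, as required.

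I do not expect a genuine obstacle here; the lemma is essentially a repackaging of subgraph-monotonicity together with the observation that the edge-vertices of $T(G)$ induce a copy of $L(G)$. The only point that needs a moment's care is that Proposition~\ref{P-PHN-SG} is stated with a non-strict inequality, so all degenerate situations (for instance $G$ acyclic, or $\Delta(G)\le 2$, where no new primitive holes are produced when passing from $G$ to $L(G)$ or from $L(G)$ to $T(G)$) are automatically absorbed and do not require a separate case analysis.
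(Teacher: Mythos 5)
Your proposal is correct and follows essentially the same route as the paper: the first inequality is quoted from Theorem~\ref{T-HNLG1}, and the second follows by observing that $L(G)$ is a subgraph of $T(G)$ (which you justify in slightly more detail than the paper, by identifying $L(G)$ with the subgraph of $T(G)$ induced by the edge-vertices) and then applying Proposition~\ref{P-PHN-SG}.
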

\begin{proof}
By Theorem \ref{T-HNLG1} , we have $h(G)\le h(L(G))$. The graph $G$ and its line graph $L(G)$ are subgraphs of $T(G)$. Therefore, By Proposition \ref{P-PHN-SG}, we have $h(L(G)) \le h(T(G))$. Combining these two inequalities, we have $h(G)\le h(L(G)) \le h(T(G))$.
\end{proof}

The following theorem establishes an improved lower bound for the primitive hole number of the line graph $T(G)$ of a given graph $G$.

\begin{thm}\label{T-HNTG1}
For any given graph $G$ and its total graph $T(G)$, $|E(G)|\le h(T(G))$.
\end{thm}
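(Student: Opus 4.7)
The plan is to produce, for each edge of $G$, an explicit distinct triangle in $T(G)$, so that summing over edges immediately yields $h(T(G)) \ge |E(G)|$.

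First, I would recall how adjacency in $T(G)$ works on its vertex set $V(G) \cup E(G)$: two vertex-type vertices are adjacent in $T(G)$ exactly when they are adjacent in $G$; two edge-type vertices are adjacent in $T(G)$ exactly when the corresponding edges share an endpoint in $G$; and a vertex-type vertex $v$ is adjacent to an edge-type vertex $e$ exactly when $v$ is incident with $e$ in $G$. These three rules come straight from the definition.

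Next, for every edge $e = uv \in E(G)$, consider the triple $\{u, v, e\}$ in $V(T(G))$. The pair $u,v$ is adjacent in $T(G)$ because $uv$ is an edge of $G$; the pairs $u,e$ and $v,e$ are adjacent in $T(G)$ because $u$ and $v$ are incident with $e$ in $G$. Therefore $\{u, v, e\}$ induces a triangle, i.e.\ a primitive hole, in $T(G)$.

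Finally, I would argue these triangles are pairwise distinct as $e$ ranges over $E(G)$: each such triangle contains exactly one vertex of edge-type (namely $e$ itself, since $u$ and $v$ are vertex-type). Hence distinct edges $e \ne e'$ produce triangles with distinct edge-type vertices, and so the triangles themselves differ. This supplies $|E(G)|$ distinct primitive holes in $T(G)$, establishing $|E(G)| \le h(T(G))$.

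There is no serious obstacle here; the only thing to be careful about is the bookkeeping that the triangles are genuinely distinct, which is handled by the observation that each contains a unique edge-type vertex. Note that this bound can be far from tight (for instance, triangles coming from $G$ itself and from shared incidences among edges are not counted), which foreshadows the sharper results the paper develops later.
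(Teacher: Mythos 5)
Your proof is correct and rests on the same key observation as the paper's: for each edge $e=uv$ of $G$ the triple $\{u,v,e\}$ induces a triangle in $T(G)$, so every edge of $G$ yields a primitive hole of $T(G)$. Your version is in fact tidier on the one point that matters, since you explicitly check that these $|E(G)|$ triangles are pairwise distinct (each contains a unique edge-type vertex), whereas the paper leaves this implicit and then digresses into a case analysis on $\Delta(G)$ that is not needed for the stated inequality.
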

\begin{proof}
Let $u, v$ be two adjacent vertices in $G$. Then, the vertices of $T(G)$ corresponding to the elements $u$, $v$ and $uv$ in $G$ form a triangle in $T(G)$. That is, every edge in $G$ corresponds to a triangle in $T(G)$. Then, we have to consider the following cases.

\ni {\em Case-1:} Let $\Delta(G)\le 2$. If $\Delta(G)=0$, then $G\cong P_1$, the trivial graph. Then, $T(G)$ is also a trivial graph and hence $|E(G)|=h(T(G))=0$. If $\Delta(G)=1$, then $G\cong K_2$ and hence $T(G)=C_3$. In this case $|E(G)|=h(T(G))=1$. 

Next, assume that $\Delta(G)=2$. Then, $G\cong P_n$ or $G\cong C_n$, where $n>2$. Let $e_r=v_iv_j$ and $e_s=v_jv_k$ be two adjacent edges in $G$. The vertices in $T(G)$ corresponding to the elements $v_i,v_j$ and the edge $e_r$ (and $v_j,v_k$ and the edge $e_s$) of $G$ form a triangle in $T(G)$. Since $e_r$ and $e_s$ are adjacent in $G$, the vertices in $T(G)$ corresponding to the elements $e_r, e_s$ and the vertex $v_j$ in $G$ also form a triangle in $G$. Now, for every vertex $v_l$, that is adjacent to any one these vertices additionally form two triangles in $T(G)$. Hence, if $G\cong P_n,~n>2$, then $h(G)=2n-3>|E(G)|=n-1$ and if $G\cong C_n$, then $h(G)=2n>|E(G)|=n$. 

\vspace{0.2cm}

\ni {\em Case-2:} Let $\Delta(G)\ge 3$. Then, each edge of $G$ corresponds to a triangle in $G$ and adjacency of two edges also forms a triangle in $T(G)$. Moreover, every $K_{1,3}$ and every $C_3$ in $G$ also form triangles in $T(G)$. Since at least one vertex of $G$ has a degree greater than or equal to $3$, then $h(T(G))>|E(G)|$. Hence, in this case, we have $h(G)<h(T(G))$.

\ni Combining the above two cases, we have $h(G)\le h(T(G))$.
\end{proof}

\ni In view of the above theorem, we can establish the following result.

\begin{cor}
If $G$ is a graph on $3$ or more vertices, then $|E(G)|<h(T(G))$.
\end{cor}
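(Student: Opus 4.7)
The plan is to strengthen the weak bound $|E(G)|\le h(T(G))$ established in Theorem~\ref{T-HNTG1} by exhibiting at least one primitive hole of $T(G)$ that lies beyond those already counted in that proof. Recall from the proof of Theorem~\ref{T-HNTG1} that every edge $e=uv\in E(G)$ yields a triangle $\{u,v,e\}$ in $T(G)$, and that these $|E(G)|$ triangles are pairwise distinct; I shall call them the \emph{edge-triangles} of $T(G)$.

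The next step is to produce one additional triangle in $T(G)$, using the hypothesis $|V(G)|\ge 3$. Since $G$ is simple, connected and has at least three vertices, $G$ is isomorphic to neither $P_1$ nor $P_2$, so some vertex $v_j\in V(G)$ must have degree at least $2$. I would choose two edges $e_r=v_iv_j$ and $e_s=v_jv_k$ incident with $v_j$; because $e_r$ and $e_s$ are adjacent in $G$, the three elements $v_j,e_r,e_s$ are pairwise adjacent in $T(G)$ and therefore form a triangle $\Delta^{*}$ of $T(G)$.

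It then remains to verify that $\Delta^{*}$ is distinct from every edge-triangle. This is immediate: an edge-triangle $\{u,v,uv\}$ contains exactly one vertex of $T(G)$ that corresponds to an edge of $G$, whereas $\Delta^{*}$ contains two such vertices, namely $e_r$ and $e_s$. Hence $h(T(G))\ge |E(G)|+1$, which is the desired strict inequality $|E(G)|<h(T(G))$. I do not anticipate any real obstacle; the only point worth handling with care is the distinctness check, and it reduces to the trivial observation about the number of ``edge-vertices'' of $T(G)$ appearing in each of the triangles under consideration.
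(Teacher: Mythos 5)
Your proof is correct and rests on the same key observation the paper uses inside Theorem~\ref{T-HNTG1} (the paper offers no separate proof of the corollary, merely deriving it from that theorem's case analysis): a vertex of degree at least $2$, which must exist in a connected graph on $3$ or more vertices, supplies two adjacent edges $e_r,e_s$ whose corresponding triangle $\{v_j,e_r,e_s\}$ in $T(G)$ is not an edge-triangle. Your version is in fact tidier than the paper's, since the explicit distinctness check via counting ``edge-vertices'' replaces the paper's separate treatment of the cases $\Delta(G)=2$ and $\Delta(G)\ge 3$ with a single uniform argument giving $h(T(G))\ge |E(G)|+1$.
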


\ni In view of the above theorem, we have the following results.

\begin{thm}\label{T-HNTG2}
For a given graph $G$, $h(T(G))=|E(G)|$ if and only if $G$ has no internal vertices. 
\end{thm}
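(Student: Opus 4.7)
The plan is to prove the biconditional by treating each direction separately and exploiting the explicit description of how triangles arise in $T(G)$ that is already implicit in the proof of Theorem~\ref{T-HNTG1}. The key observation, which I shall reuse throughout, is that triangles in $T(G)$ come in two structurally distinct flavours: those with exactly one ``edge-vertex'' of $T(G)$, namely triples $\{u,v,uv\}$ with $uv \in E(G)$; and those with two edge-vertices, namely triples $\{e, v, e'\}$ where $e = uv$ and $e' = vw$ are two edges of $G$ sharing the endpoint $v$. (A third flavour with three edge-vertices, coming from $C_3$'s in $G$, only appears when $G$ contains a triangle.)

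For the easy direction, suppose $G$ has no internal vertices. Then every vertex of $G$ has degree at most one, so each connected component of $G$ is isomorphic to $K_1$ or $K_2$. Direct inspection gives $T(K_1) \cong K_1$ and $T(K_2) \cong K_3$, whence $h(T(K_1)) = 0 = |E(K_1)|$ and $h(T(K_2)) = 1 = |E(K_2)|$. Since $T$ respects disjoint unions and both $h(\cdot)$ and $|E(\cdot)|$ are additive over components (the former by the additivity remark after Definition 2.1), summing over components yields $h(T(G)) = |E(G)|$.

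For the converse, I argue by contrapositive. Assume $G$ has an internal vertex $v$, so that $\deg_G(v) \ge 2$, and pick two neighbours $u, w$ of $v$. From the proof of Theorem~\ref{T-HNTG1} we already have $|E(G)|$ pairwise distinct triangles in $T(G)$, namely the one-edge-vertex triangles $\{x, y, xy\}$ indexed by edges $xy \in E(G)$. I produce an additional triangle $\{uv, v, vw\}$: the edge-vertices $uv$ and $vw$ of $T(G)$ are adjacent there because their corresponding edges share the endpoint $v$ in $G$, and each of them is adjacent to $v$ by incidence. This triangle has two edge-vertices, so it is not of the form $\{x, y, xy\}$ and therefore was not counted among the $|E(G)|$ baseline triangles. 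Hence $h(T(G)) \ge |E(G)| + 1$, contradicting the hypothesis $h(T(G)) = |E(G)|$.

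The main obstacle is being scrupulous about bookkeeping: one must verify that the ``extra'' triangle $\{uv, v, vw\}$ is genuinely distinct from every triangle of the form $\{x, y, xy\}$, which is ensured by counting edge-vertices of $T(G)$ in each triangle. Once this classification is set up cleanly, the forward direction is a one-line counting argument and the reverse direction is a trivial component-wise check.
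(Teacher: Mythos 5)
Your proof is correct and follows essentially the same route as the paper: reduce the forward direction to the cases $K_1$ and $K_2$, and for the converse exhibit the extra triangle $\{uv, v, vw\}$ in $T(G)$ arising at an internal vertex $v$. Your version is slightly more careful than the paper's in verifying that this triangle is distinct from the $|E(G)|$ baseline triangles $\{x,y,xy\}$ (via the edge-vertex count), but the underlying argument is the same.
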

\begin{proof}
Assume that $G$ has no internal vertex. Then, $\Delta(G)<2$ and either $G\cong K_1$ or $G\cong K_2$. Hence, by \ref{T-HNTG1}, $h(T(G))=|E(G)|$. 

Assume the converse. That is, for a given graph $G$, we have $h(T(G))=|E(G)|$. If possible, let $G$ has some internal vertex, say $v$. Then, $v$ is adjacent to at least two vertices $u$ and $w$ in $G$. Let $e_i=uv$ and $e_j=vw$. Label the vertices of $T(G)$ by the same label of the corresponding element of $G$. Then, we can find three triangles in $T(G)$, formed by the vertex  triplets $\{u,v, e_i\}$, $v,w, e_j$ and $e_i,e_j, v$. Hence, for any pair of adjacent edges in $G$, there exists three triangles in $G$ incident on their common (internal) vertex. Therefore, $|E(G)|<h(T(G))$, a contradiction to the hypothesis. Therefore, $G$ can not have internal vertices.
\end{proof}

\ni The above theorem leads us to the following result.

\begin{cor}
For a given graph $G$, $h(T(G))>|E(G)|$ if and only if $G$ has some internal vertices. 
\end{cor}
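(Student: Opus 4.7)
The plan is to obtain this corollary as a direct logical consequence of Theorems \ref{T-HNTG1} and \ref{T-HNTG2}. Theorem \ref{T-HNTG1} gives the universal weak bound $|E(G)| \le h(T(G))$, so the strict inequality $h(T(G)) > |E(G)|$ is equivalent to the negation $h(T(G)) \ne |E(G)|$. Theorem \ref{T-HNTG2} already characterises exactly when equality holds, namely when $G$ has no internal vertex. Negating both sides of that biconditional yields the desired statement, so the whole corollary is essentially a contrapositive restatement packaged with the universal lower bound.

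More concretely, for the forward direction I would assume $h(T(G)) > |E(G)|$ and suppose, for contradiction, that $G$ has no internal vertex; Theorem \ref{T-HNTG2} would then force $h(T(G)) = |E(G)|$, contradicting strictness. For the reverse direction I would assume $G$ has at least one internal vertex, so the ``only if'' part of Theorem \ref{T-HNTG2} rules out $h(T(G)) = |E(G)|$; combined with $|E(G)| \le h(T(G))$ from Theorem \ref{T-HNTG1}, this forces $h(T(G)) > |E(G)|$.

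The main obstacle is essentially nonexistent: the statement is a purely formal corollary, and all the combinatorial work identifying the three triangles $\{u,v,e_i\}$, $\{v,w,e_j\}$, $\{e_i,e_j,v\}$ produced by an internal vertex $v$ has already been done inside the proof of Theorem \ref{T-HNTG2}. The only care needed is to invoke the ``if and only if'' of Theorem \ref{T-HNTG2} in both directions and to note that Theorem \ref{T-HNTG1} promotes ``$\ne$'' to ``$>$''. Thus the proof will read as two short contrapositive lines rather than any new combinatorial argument.
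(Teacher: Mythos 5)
Your proposal is correct and matches the paper's approach, which simply observes that the corollary is the contrapositive of Theorem \ref{T-HNTG2}; your version is slightly more careful in explicitly invoking Theorem \ref{T-HNTG1} to upgrade the resulting ``$h(T(G))\neq |E(G)|$'' to the strict inequality ``$h(T(G))>|E(G)|$'', a step the paper leaves implicit.
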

\begin{proof}
The statement of the theorem is the contrapositive of Theorem \ref{T-HNTG2}.
\end{proof}

\section{The Primitive Hole Number of the Underlying Graph of a Jaco Graph}

Let us now recall the definition of Jaco graphs, finite and infinite, as follows.

\begin{defn}{\rm 
\cite{KFW} The {\em infinite Jaco graph}, denoted by $J_{\infty}(1)$, is a directed graph with vertex set $V$ and arc set $A$ such that $V(J_\infty(1)) = \{v_i| i \in \N\}$, $E(J_\infty(1)) \subseteq \{(v_i, v_j): i, j \in \N, i< j\}$ and $(v_i,v_ j) \in A(J_\infty(1))$ if and only if $2i - d^-(v_i)\ge j$. A \textit{finite Jaco graph}, denoted by $J_n(1)$, is a finite subgraph of $J_{\infty}(1)$, where $n$ is a finite positive integer.}
\end{defn}

\begin{defn}{\rm 
The vertices attaining degree $\Delta (J_n(1))$ is called the \textit{Jaconian vertices} of the Jaco Graph $(J_n(1)$. The set of Jaconian vertices of $(J_n(1)$ is denoted by $\mathbb{J}_n(1)$.}
\end{defn}

\ni The Jaco graph $(J_\infty(1))$, with the vertex set $V(J_\infty(1)) = \{v_i: i \in \N\}$,  has the fundamental properties as given below.

\begin{enumerate}\itemsep=0mm
\item if $v_j$ is the head of an arc $(v_i, v_j)$, then $i<j$,
\item if $k\in \N$ is the smallest integer such that $v_k$ is a tail vertex of an arc $(v_k,v_j)$ in $J_{\infty}(1)$, then for all $k< l <j$, the vertex $v_ l$ is the tail of an arc to $v_j$.
\item the degree of vertex $k$ is $d(v_k) = k$. 
\end{enumerate}

The family of finite directed graphs are those limited to $n \in \N$ vertices by lobbing off all vertices (and hence the arcs incident on these vertices) $v_t,~ t > n$. Hence, trivially we have $d(v_i) \leq i$ for $i \in \N$.

\vspace{0.2cm}

\ni We denote the underlying graph by $J^{\ast}_n(1)$. We now provide a recursive formula of the number of primitive holes, $h(J^{\ast}_{n+1}(1))$ in terms of $h(J_n^{\ast}(1)),n \ge 4$. 

\vspace{0.2cm}

\ni If $v_i$ is the prime Jaconian vertex of a Jaco Graph $J_n(1)$, the complete subgraph on vertices $v_{i+1}, v_{i+2},v_{i+3}, \ldots, v_n$ is called the {\em Hope subgraph} of a Jaco Graph and denoted by $\mathbb{H}_n(1)$.

In view of the above definitions and concept, we can determine the primitive hole number of the underlying graph of a Jaco graph as in the following theorem.

\begin{thm}
Let $J^{\ast}_n(1)$ be the underlying graph of a finite Jaco Graph $J_n(1)$ with Jaconian vertex $v_i$, where $n$ is a positive integer greater than or equal to $4$. Then, $h(J^{\ast}_{n+1}(1)) = h(J^{\ast}_n(1)) + \sum\limits_{j=1}^{(n-i)-1}(n-i)-j$.
\end{thm}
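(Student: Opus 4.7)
The plan is to exploit the fact that $J^{\ast}_n(1)$ is obtained from $J^{\ast}_{n+1}(1)$ by deleting the single vertex $v_{n+1}$. Every primitive hole of $J^{\ast}_n(1)$ remains a primitive hole of $J^{\ast}_{n+1}(1)$, and conversely any primitive hole of $J^{\ast}_{n+1}(1)$ that is not already present in $J^{\ast}_n(1)$ must contain $v_{n+1}$. Consequently $h(J^{\ast}_{n+1}(1)) - h(J^{\ast}_n(1))$ equals the number of triangles of $J^{\ast}_{n+1}(1)$ incident with $v_{n+1}$. Each such triangle corresponds bijectively to an unordered pair of neighbours of $v_{n+1}$ that are themselves joined by an edge, so the question reduces to counting the edges of $J^{\ast}_n(1)$ whose endpoints both lie in $N(v_{n+1})$.

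Next I would identify $N(v_{n+1})$ explicitly. Since $v_{n+1}$ is the largest-indexed vertex of $J^{\ast}_{n+1}(1)$, its neighbours are precisely those $v_k$ with $k\le n$ for which $(v_k,v_{n+1})$ is an arc of $J_{\infty}(1)$, i.e.\ those $k$ satisfying $2k - d^-(v_k) \ge n+1$. The contiguity property~(2) of Jaco graphs guarantees that this set is a consecutive block $\{v_{k_0}, v_{k_0+1}, \ldots, v_n\}$. The structural claim to be established is $k_0 = i+1$, so that the back-neighbourhood of $v_{n+1}$ coincides with the vertex set of the Hope subgraph $\mathbb{H}_n(1)$ of $J_n(1)$. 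Granted this identification, and using the fact that $\mathbb{H}_n(1)$ is by definition the complete graph $K_{n-i}$, the subgraph induced on $N(v_{n+1})$ has exactly $\binom{n-i}{2}$ edges, so $v_{n+1}$ lies on $\binom{n-i}{2}$ new triangles.

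Finally, the substitution $k=(n-i)-j$ yields
$$\binom{n-i}{2} \;=\; \sum_{k=1}^{(n-i)-1} k \;=\; \sum_{j=1}^{(n-i)-1}\bigl((n-i)-j\bigr),$$
which is exactly the extra term in the claimed recurrence. The main obstacle will be the structural identification $k_0=i+1$ used above: it ties the purely numerical arc rule $2k-d^-(v_k)\ge n+1$ to the definition of the prime Jaconian vertex of $J_n(1)$, and I expect this step to rest on the degree-sequence and recursive structure of Jaco graphs developed in \cite{KFW} rather than on any fresh combinatorial argument. Once that geometric fact is in hand, the rest of the proof is a short bookkeeping exercise.
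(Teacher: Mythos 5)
Your proposal is correct and follows essentially the same route as the paper: both identify the new primitive holes as exactly the triangles through $v_{n+1}$, use the fact that $v_{n+1}$'s back-neighbourhood is the vertex set $\{v_{i+1},\ldots,v_n\}$ of the complete Hope subgraph, and count the resulting $\binom{n-i}{2}$ triangles (the paper by explicit enumeration of the triplets, you by counting edges among the neighbours, which is the same sum). The structural fact you flag as the main obstacle is likewise taken by the paper directly from \cite{KFW} rather than re-derived.
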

\begin{proof}
Consider the underlying Jaco graph, $J^{\ast}_n(1), n \in \N,n\ge 4$ with prime Jaconian vertex $v_i$. Now consider $J^{\ast}_{n+1}(1)$. From the definition of a Jaco graph, the extension from $J^{\ast}_n(1)$ to $J^{\ast}_{n+1}(1)$ is obtained by adding the vertex $v_{n+1}$ and the edges $v_{i+1}v_{n+1}, v_{i+2}v_{n+1},\ldots, v_nv_{n+1}$ to $J^{\ast}_n(1)$.
	
We know that the Hope graph $\mathbb{H}(J^{\ast}_n(1))$ (see \cite{KFW}) is the complete graph on vertices $v_{i+1}, v_{i+2}, v_{i+3},\ldots, v_n$. So it follows that the triplets of vertices induce the additional primitive holes.
\begin{align*}
& \underbrace{\{v_{i+1}, v_{n+1}, v_{i+2}\}, \{v_{i+1}, v_{n+1}, v_{i+3}\},\ldots, \{v_{i+1}, v_{n+1}, v_n\}}_{(n-i)-1, sets},\\
& \underbrace{\{v_{i+2}, v_{n+1}, v_{i+4}\}, \{v_{i+2}, v_{n+1}, v_{i+5}\},\ldots, \{v_{i+2}, v_{n+1}, v_n\}}_{(n-i)-2, sets},\\
& \underbrace{\{v_{i+3}, v_{n+1}, v_{i+4}\}, \{v_{i+3}, v_{n+1}, v_{i+5}\},\ldots, \{v_{i+3}, v_{n+1}, v_n\}}_{(n-i)-3, sets},\\
& ~~~~~~~~~~~\vdots \\
& ~~~~~~~~~~~\vdots \\
& \underbrace{\{v_{i+(n-i -1)}, v_{n+1}, v_{i+ (n-i)}\}}_{(n-i)-(n-i-1)=1, set}
\end{align*}

\ni Therefore, we have $h(J^{\ast}_{n+1}(1))=h(J^{\ast}_n(1)) +\sum\limits_{j=1}^{(n-i)-1}(n-i)-j$.
\end{proof} 

\section{Primitive Degree of Graphs}

In this section, we introduce the notion of the primitive degree of a vertex of a given graph $G$ as follows.

\begin{defn}{\rm 
The \textit{primitive degree} of a vertex $v$ of a given graph $G$ is the number of primitive holes to which the vertex $v$ is a common vertex. The primitive degree of a vertex $v$ of $G$ is denoted by $d^p_G(v)$.}
\end{defn}

It follows easily that  the primitive degree of each vertex of the complete graph $K_3$ is $1$. We can also note that the primitive degree of any vertex of a path $P_n$ and primitive degree of any vertex of a cycle $C_n;~n\ge 3$ is $0$. For the complete graphs $K_n; ~ n\ge 4$, we have the following result.
	
\begin{thm}\label{T-PDKN1}
The primitive degree of a vertex $v$ of a complete graph $K_n;~n\ge 3$ is $\sum\limits_{i=1}^{n-2}i$.
\end{thm}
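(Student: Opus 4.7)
The plan is to count triangles through $v$ directly by a combinatorial bijection. Fix any vertex $v\in V(K_n)$. Every primitive hole of $K_n$ is a triangle, hence determined by an unordered triple of vertices. A triangle containing $v$ is therefore determined by choosing two further vertices from $V(K_n)\setminus\{v\}$, a set of size $n-1$. Because $K_n$ is complete, any two such vertices are themselves joined by an edge, so every unordered pair $\{u,w\}\subseteq V(K_n)\setminus\{v\}$ together with $v$ forms a genuine triangle; conversely, every triangle through $v$ arises this way, and distinct pairs produce distinct triangles. Thus $d^p_{K_n}(v)=\binom{n-1}{2}$.

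It then remains only to rewrite $\binom{n-1}{2}$ in the summation form demanded by the statement, using the elementary identity
\[
\binom{n-1}{2}=\frac{(n-1)(n-2)}{2}=1+2+\cdots+(n-2)=\sum_{i=1}^{n-2}i,
\]
which finishes the proof. No earlier result from the paper is strictly required, though one could invoke Proposition \ref{P-PHN-KN} applied to the complete subgraph $K_{n-1}$ on $V(K_n)\setminus\{v\}$ to repackage the counting argument.

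As a sanity check, the same conclusion can be reached by induction on $n$. The base case $n=3$ gives $d^p_{K_3}(v)=1=\sum_{i=1}^{1}i$, since $v$ lies in the unique triangle of $K_3$. For the inductive step, view $K_n$ as $K_{n-1}$ with one new vertex $v_n$ adjoined to every existing vertex; the new triangles through $v$ are exactly those of the form $\{v,v_n,w\}$ with $w\in V(K_{n-1})\setminus\{v\}$, of which there are $n-2$, so $d^p_{K_n}(v)=d^p_{K_{n-1}}(v)+(n-2)=\sum_{i=1}^{n-3}i+(n-2)=\sum_{i=1}^{n-2}i$. I do not foresee any real obstacle; the only care needed is to verify that every unordered pair from $V(K_n)\setminus\{v\}$ genuinely yields a triangle (immediate from completeness) and to recognise the triangular-number identity.
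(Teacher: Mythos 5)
Your proposal is correct and follows essentially the same route as the paper: the paper's main proof also counts the triangles through $v$ as $\binom{n-1}{2}$ (via pairs of edges at $v$, equivalent to your pairs of neighbours) and then rewrites this as $\sum_{i=1}^{n-2}i$, and the paper even supplies the same induction on $n$ as an alternate proof. Nothing further is needed.
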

\begin{proof}
Let $v$ be an arbitrary vertex of a complete graph $K_n$. Then, any two pair of edges of $K_n$ form a triangle in $K_n$ together with an edge that is not incident on $v$. That is, the number of triangles incident on the vertex $v$ is the number of distinct pairs of edges that are incident on $v$. That is, $d_{K_n}^p(v)=\binom{n-1}{2} = \frac{1}{2}(n-1)(n-2)=\sum\limits_{i=1}^{n-2}i$.
\end{proof}

Now, we also give an alternate proof for Theorem \ref{T-PDKN1} using mathematical induction as follows.

\begin{proof}
Consider the complete graph $K_3$ first. Here, in this case, $d^p_{K_3}(v)= 1$. Assume the result holds for $K_t$. Hence, we have $d^p{K_t}(v)=\sum\limits_{i=1}^{t-2}i$. Label the vertices of $K_t$ as $v_1, v_2, v_3,\ldots, v_t$. 

Now, consider the complete graph $K_{t+1}$. In the extension from $K_t$ to $K_{t+1}$, the vertex $v_{t+1}$ with the edges $v_{t+1}v_1, v_{t+1}v_2, v_{t+1}v_3,\ldots, v_{t+1}v_t$ were added. Hence, the following combinations of vertex sets induce triangles in $K_{t+1}$. 
\begin{align*}
&\underbrace{\{v_{t+1}, v_1, v_2\}, \{v_{t+1}, v_1, v_3\},\ldots, \{v_{t+1}, v_1, v_t\}}_{d_{K_t}(v_1), sets},\\
&\underbrace{\{v_{t+1}, v_2, v_3\}, \{v_{t+1}, v_2, v_4\},\ldots, \{v_{t+1}, v_2, v_t\}}_{d_{K_t}(v_2)-1, sets},\\ 
&\underbrace{\{v_{t+1}, v_3, v_4\}, \{v_{t+1}, v_3, v_5\},\ldots\ldots, \{v_{t+1}, v_3, v_t\}}_{d_{K_t}(v_3)-2, sets},\\
&~~~~~~~~~~~~~\vdots\\
&~~~~~~~~~~~~~\vdots \\
&\{v_{t+1}, v_{t-1}, v_t\}
\end{align*}
induce the primitive holes having $v_{t+1}$ as a common vertex, exhaustively. The total of such sets which induce triangles in $K_{t+1}$ is given by $(t-1)+(t-2)+(t-3)+\ldots +1=((t+1)-2)+\sum\limits_{i=1}^{t-2}i = \sum\limits_{i=1}^{(t+1)-2}i$.
	
\ni Hence, the result follows by induction.
\end{proof}

A question that arouses much interest in this context is about the primitive degree of the vertices of a line graph of a given graph. The primitive degree of the vertices of line graphs is determined in following theorem.

\begin{thm}
The primitive degree of a vertex $v$ in the line graph of a graph $G$ is $d^p_{L(G))}=\varLambda + \binom{d(v_i)-1}{2}+ \binom{d(v_j)-1}{2}$, where $\varLambda$ is the number of triangles containing the edge $e=v_iv_j$ of $G$ corresponding to the vertex $v$ in $L(G)$. 
\end{thm}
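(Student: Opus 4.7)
The plan is to classify the triangles of $L(G)$ containing the vertex $v$ (which corresponds to the edge $e = v_iv_j$ of $G$) according to the shape of the three edges of $G$ from which they arise. Recall that three vertices of $L(G)$ induce a triangle precisely when the three corresponding edges of $G$ are pairwise adjacent. In a simple graph, a triple of pairwise adjacent edges admits only one of two configurations: either all three edges share a single common endpoint (so the triple forms the edge set of a $K_{1,3}$), or their endpoints occupy exactly three vertices of $G$ and the three edges themselves form a triangle $C_3$ in $G$.

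With this dichotomy in hand, I would count the triangles of $L(G)$ through $v$ in three groups. First, the triangles arising from a $C_3$ of $G$ that contains the edge $e$: by definition of $\varLambda$ there are exactly $\varLambda$ such triangles. Second, the triangles arising from a $K_{1,3}$-star centered at $v_i$: the remaining two edges of the triple must be chosen from the $d(v_i)-1$ edges of $G$ incident to $v_i$ but distinct from $e$, giving $\binom{d(v_i)-1}{2}$ contributions. Third, the symmetric count at $v_j$ yields $\binom{d(v_j)-1}{2}$.

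The main obstacle — indeed the only step where the argument could quietly fail — is to verify that these three groups are pairwise disjoint, so that the three counts add without overlap. A triple of star type at $v_i$ cannot also be of star type at $v_j$: if an edge other than $e$ were incident to both $v_i$ and $v_j$ then it would coincide with $e$ in the simple graph $G$, which is excluded. A star-type triple cannot coincide with a $C_3$-type triple either, because in a $C_3$-configuration no single vertex lies on all three edges, whereas a star-type triple has such a vertex by construction. Summing the three disjoint contributions gives
\[
d^p_{L(G)}(v) \;=\; \varLambda + \binom{d(v_i)-1}{2} + \binom{d(v_j)-1}{2},
\]
as claimed.
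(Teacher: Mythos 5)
Your proof is correct and follows essentially the same route as the paper: classify the triangles of $L(G)$ through $v$ as arising either from a $C_3$ of $G$ containing $e$ (giving $\varLambda$) or from a $K_{1,3}$ centered at $v_i$ or $v_j$ (giving the two binomial terms). Your explicit verification that the three families are pairwise disjoint is a welcome extra step that the paper's proof leaves implicit, but the underlying argument is the same.
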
 
\begin{proof}
A triangle in $L(G)$ corresponds to either a triangle in $G$ or a $K_{1,3}$ in $G$. The number of triangles incident on a vertex $v$ of $L(G)$ is the number triangles or the number of $K_{1,3}$ which contain the edge $e$ of $G$, corresponding to the vertex $v$ in $L(G)$. The number of $K_{1,3}$ in $G$ containing the edge $e=v_iv_j$ is $\binom{d(v_i)-1}{2}+ \binom{d(v_j)-1}{2}$. Therefore, the number of triangles incident on the vertex $v$ is $\binom{d(v_i)-1}{2}+ \binom{d(v_j)-1}{2}+\varLambda$, where $\varLambda$ is the number of triangles in $G$ containing the edge $e$ in $G$. This completes the proof. 
\end{proof} 

The following result establishes recurrence relation on the primitive degree of the vertices of complete graphs.

\begin{prop}
For the graph $K_n;~n\ge 4$, the primitive degree $d^p_{K_n}(v)= d^p_{K_{n-1}}(u)+(n-2)$.
\end{prop}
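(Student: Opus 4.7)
The cleanest route is a direct combinatorial argument that interprets the recurrence as a ``last vertex added'' decomposition, mirroring the inductive step already used in the second proof of Theorem \ref{T-PDKN1}. I would begin by fixing a vertex $v$ of $K_n$, choosing any other vertex $w \neq v$, and regarding $K_{n-1} = K_n - w$ as a subgraph of $K_n$ in which $v$ survives. By the vertex-transitivity of complete graphs, the primitive degree in $K_{n-1}$ is the same at every vertex, so calling this common value $d^p_{K_{n-1}}(u)$ matches the statement of the proposition.

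Next I would partition the primitive holes containing $v$ in $K_n$ into two classes: those that do not use $w$, and those that do. The first class is exactly the set of triangles of $K_{n-1}$ incident to $v$, which contributes $d^p_{K_{n-1}}(u)$. The second class consists of triangles of the form $\{v, w, x\}$ with $x \in V(K_n) \setminus \{v,w\}$. Since $K_n$ is complete, every such choice of $x$ gives a valid triangle, and there are exactly $n-2$ admissible vertices $x$. Summing the two contributions yields $d^p_{K_n}(v) = d^p_{K_{n-1}}(u) + (n-2)$.

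There is essentially no obstacle here; the only point requiring a word of care is that the decomposition is genuinely exhaustive and disjoint, which follows because every primitive hole of $K_n$ on three vertices either omits $w$ or contains $w$, and the two cases are mutually exclusive. As a sanity check, one may also verify the formula algebraically via Theorem \ref{T-PDKN1}: since $d^p_{K_n}(v) = \sum_{i=1}^{n-2} i$ and $d^p_{K_{n-1}}(u) = \sum_{i=1}^{n-3} i$, the difference telescopes to $n-2$, in agreement with the combinatorial count.
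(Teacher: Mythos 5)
Your proposal is correct and takes essentially the same approach as the paper: the paper builds $K_n$ from $K_{n-1}$ by adding a vertex $v_n$ and counts the $n-2$ new triangles $\{v_i, v_n, v_j\}$ through a fixed vertex, which is exactly your deletion/partition argument read in reverse. Your explicit remarks on disjointness, exhaustiveness, and the telescoping check against Theorem \ref{T-PDKN1} only make the same argument slightly more careful.
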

\begin{proof}
We know that the primitive degree of every vertex of a complete graph is the same. Now assume that we have the primitive degree of a vertex, say $v_{i}$ in $K_{n-1}$, where $1\le i\le n-1$. Now, extend $K_{n-1}$ to $K_n$ by adding a vertex, say $v_n$ to $K_{n-1}$ and joining every vertex of $K_{n-1}$ to the new vertex $v_n$. Then, the vertex triplets $\{v_{i}, v_n, v_j\}$, $i\ne j,~ 1\le i\ne j\le n-1$ form new triangles in $K_n$ that are not in $K_{n-1}$. Hence, exactly $(n-2)$ additional primitive holes incident on the vertex $v_i$ in $K_n$ than the number of primitive holes in $K_{n-1}$.  
\end{proof}

Invoking the concepts mentioned above, the primitive degree of a Jaco graph is determined in the following result.

\begin{thm}
For $n\ge 5$, the primitive hole number of the Jaco graph $J_n(1)$ is $h(J_n(1))=\sum (d^+_{J_n(1)}(v_j)-1)$, for all $d^+_{J_n(1)}(v_j)\ge 2$.
\end{thm}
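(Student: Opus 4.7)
The natural approach is to count primitive holes by the smallest-indexed vertex they contain. Since every arc of $J_n(1)$ is directed from a lower- to a higher-indexed vertex, any triangle $\{v_j, v_p, v_q\}$ with $j < p < q$ forces both $(v_j, v_p)$ and $(v_j, v_q)$ to be arcs, together with $(v_p, v_q)$. In particular, only vertices $v_j$ with $d^+_{J_n(1)}(v_j) \ge 2$ can play the role of the smallest vertex of a primitive hole, which explains the index set over which the sum is taken.

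The key structural input is property (2) of Jaco graphs listed in the previous section: whenever $v_j$ has any out-arc at all, its out-neighbors form the consecutive block $\{v_{j+1}, v_{j+2}, \ldots, v_{j+d^+(v_j)}\}$. Consequently, counting triangles with smallest vertex $v_j$ reduces to counting those pairs $v_{j+a}, v_{j+b}$ with $1 \le a < b \le d^+(v_j)$ for which the arc $(v_{j+a}, v_{j+b})$ is present. I would then exhibit, for each $a \in \{1, 2, \ldots, d^+(v_j) - 1\}$, the triangle $\{v_j, v_{j+a}, v_{j+a+1}\}$, whose third edge is automatic: since $j + d^+(v_j) \le n$, we have $j + a \le n - 1$, so $v_{j+a}$ must have at least one out-arc in $J_n(1)$, and by the consecutivity property its first out-neighbor is precisely $v_{j+a+1}$. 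Summing the contributions $d^+(v_j) - 1$ over all eligible $v_j$ yields the claimed formula.

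The real substance, and the step I expect to be the main obstacle, lies in ruling out any further triangles with smallest vertex $v_j$ coming from non-consecutive pairs $\{v_{j+a}, v_{j+b}\}$ with $b - a \ge 2$; this requires combining the defining inequality $2i - d^-(v_i) \ge j$ with the recursion satisfied by $d^-$ along the vertex sequence, and is the subtlest part of the argument. A cleaner alternative route, which I would probably pursue, is to bypass the direct enumeration and argue by induction on $n$ using the recursive formula from the previous theorem: verify the base case $n = 5$ by hand, and then show that when $v_{n+1}$ is appended to $J_n^{\ast}(1)$, the change in $\sum_{d^+(v_j) \ge 2}(d^+(v_j) - 1)$ matches the increment $\binom{n-i}{2}$ recorded there, where $v_i$ is the prime Jaconian vertex of $J_n(1)$. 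This isolates the combinatorial content in a single identity about how the out-degree sequence updates when the Hope subgraph is enlarged by one vertex.
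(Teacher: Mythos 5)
Your first route breaks down exactly at the step you flagged, and not merely for lack of an argument: the non-consecutive triangles you would need to rule out actually exist. Take $n=7$. Computing the arcs from the defining inequality $2i-d^{-}(v_i)\ge j$, the arc set of $J_7(1)$ is $(1,2),(2,3),(3,4),(3,5),(4,5),(4,6),(4,7),(5,6),(5,7),(6,7)$. Hence $\{v_4,v_5,v_7\}$ is a triangle whose smallest vertex is $v_4$ and which uses the non-consecutive pair $(v_{4+1},v_{4+3})$; together with $\{v_4,v_5,v_6\}$ and $\{v_4,v_6,v_7\}$, the vertex $v_4$ is the smallest vertex of three triangles, whereas $d^{+}_{J_7(1)}(v_4)-1=2$. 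Your fallback route fails for the same underlying reason: in passing from $J_6(1)$ to $J_7(1)$ the quantity $\sum_{d^{+}\ge 2}(d^{+}(v_j)-1)$ increases from $2$ to $4$, i.e.\ by $2$, while the recursion of the preceding theorem adds $\binom{6-3}{2}=3$ new holes. Indeed $h(J^{\ast}_7(1))=5$ (the triangles are $\{3,4,5\}$, $\{4,5,6\}$, $\{4,5,7\}$, $\{4,6,7\}$, $\{5,6,7\}$) while the stated formula evaluates to $4$, so the statement as printed is already false at $n=7$ and no strategy can close the gap. For what it is worth, the paper's own proof is an induction that stumbles at precisely the same point: it asserts without justification that the new holes through $v_{k+1}$ are accounted for by unit increments of the out-degrees.

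The structural fact you correctly isolated --- that neighbourhoods on one side are consecutive blocks --- does yield a clean count, but only if you charge each triangle to its \emph{largest} vertex rather than its smallest. The in-neighbourhood of any $v_j$ is a clique: if $v_a$ and $v_b$ with $a<b$ both point to $v_j$, then $2a-d^{-}(v_a)\ge j>b$, so $(v_a,v_b)$ is an arc. Since every triangle lies inside the in-neighbourhood of its largest vertex, this gives $h(J^{\ast}_n(1))=\sum_{j}\binom{d^{-}_{J_n(1)}(v_j)}{2}$, which returns $1,2,5$ for $n=5,6,7$ as it should. If you want a provable statement, that is the one; the out-degree version cannot be repaired along your lines because it undercounts exactly when some vertex of the Hope clique retains out-degree at least $2$ after truncation to $n$ vertices.
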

\begin{proof}
We prove this theorem bu mathematical induction. Consider $J^{\ast}_5(1)$. It has one primitive hole induced by the vertices $\{v_3, v_4, v_5\}$. Only the vertex $v_3$ has $d^+_{J_5(1)}(v_3)=2\ge 2$. Since $\sum\limits_{d^+_{J_5(1)}(v_3)\ge 2} (2-1)=1$, the results holds for $J^{\ast}_5(1)$.

Assume the result holds for $J^{\ast}_k(1)$ having Jaconian vertex $v_i$. Hence, we have
$h(J^{\ast}_k(1)) = \sum\limits(d^+_{J_k(1)}(v_j)-1), ~ \forall d^+_{J_k(1)}(v_j)\ge 2$.
Now, consider the graph $J^{\ast}_{k+1}(1)$. We note that $h(J^{\ast}_{k+1}(1)) = h(J^{\ast}_k(1)) + d^p_{J^{\ast}_{k+1}(1)}(v_{k+1})$. Consider the number of primitive holes having both $v_{i+1}, v_{k+1}$ in common. Clearly, the triangles induced by $\{ v_{k+1}, v_{i+1}, v_{i+2}\}, \{ v_{k+1}, v_{i+1}, v_{i+3}\}, \{ v_{k+1}, v_{i+1}, v_{i+4}\}, ..., \{ v_{k+1}, v_{i+1}, v_k\}$ are exhaustive. The number of primitive holes having vertices $v_{i+1}, v_{k+1}$ in common is given by $d^+_{J_k(1)}(v_{i+1}) = d^+_{J_{k+1}(1)}(v_{i+1}) - 1$. Since the latter sub-result applies to every vertex $v_{i+2}, v_{i+3}, v_{i+4}, \ldots, v_k, v_{k+1}$, the result $h(J^{\ast}_n(1))=\sum\limits(d^+_{J_n(1)}(v_j) -1), ~ \forall d^+_{J_n(1)}(v_j)\ge 2$, follows by induction.
\end{proof}

In the following proposition, the primitive hole number of a complete graph $K_n$ is determined recursively from the primitive degree of the vertices of complete graphs order less than or equal to $n$. 

\begin{prop}
For a complete graph $K_n,n\ge 4$, $h(K_n)=\sum\limits_{i=3}^{n}d^p_{K_i}(v)$, where ${v \in V(K_i)}$.
\end{prop}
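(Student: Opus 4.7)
The plan is to prove the identity by induction on $n$, using the fact that in the extension from $K_{n-1}$ to $K_n$ every newly created primitive hole must contain the freshly added vertex, so the number of new holes equals the primitive degree of that vertex in $K_n$.

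First I would set up the base case $n=4$. Since $K_3$ is itself a primitive hole, $h(K_3)=1$, and by the observation immediately preceding Theorem~\ref{T-PDKN1} we have $d^p_{K_3}(v)=1$. A direct count (or Proposition~\ref{P-PHN-KN}) gives $h(K_4)=\binom{4}{3}=4$, and by Theorem~\ref{T-PDKN1} $d^p_{K_4}(v)=\binom{3}{2}=3$, so $h(K_4)=1+3=d^p_{K_3}(v)+d^p_{K_4}(v)$, matching the claim.

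For the inductive step, suppose $h(K_{n-1})=\sum_{i=3}^{n-1}d^p_{K_i}(v)$. To extend to $K_n$, view $K_n$ as obtained from $K_{n-1}$ by adjoining a new vertex $v_n$ together with all edges $v_nv_j$, $1\le j\le n-1$. Every triangle in $K_n$ that is not a triangle in $K_{n-1}$ must use at least one of the new edges, hence must contain $v_n$; conversely, every triangle containing $v_n$ is new. Therefore
\[
h(K_n)-h(K_{n-1})=d^p_{K_n}(v_n),
\]
and since every vertex of $K_n$ has the same primitive degree, $d^p_{K_n}(v_n)=d^p_{K_n}(v)$ for any $v\in V(K_n)$. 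Combining with the inductive hypothesis gives the desired formula. (As a sanity check, one can also verify the identity in closed form: $h(K_n)=\binom{n}{3}$ by Proposition~\ref{P-PHN-KN}, $d^p_{K_i}(v)=\binom{i-1}{2}$ by Theorem~\ref{T-PDKN1}, and the hockey-stick identity $\sum_{i=3}^{n}\binom{i-1}{2}=\binom{n}{3}$ closes the computation.)

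The main (mild) obstacle is simply justifying the equality $h(K_n)-h(K_{n-1})=d^p_{K_n}(v)$ cleanly; everything else is bookkeeping. The argument hinges on the vertex-transitivity of $K_n$, which lets us replace $d^p_{K_n}(v_n)$ by $d^p_{K_i}(v)$ for an arbitrary $v\in V(K_i)$ in the sum, exactly as the statement demands.
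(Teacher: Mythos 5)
Your proof is correct and is essentially the paper's argument read in the opposite direction: the paper peels vertices off $K_n$ one at a time, charging each triangle to the first of its vertices to be removed, while you add vertices one at a time and charge each new triangle to the vertex just added, so both reduce to the same telescoping identity $h(K_i)-h(K_{i-1})=d^p_{K_i}(v)$ justified by vertex-transitivity. Your version is arguably cleaner since it states the key step explicitly as a formal induction, but the underlying decomposition is identical.
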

\begin{proof}
Consider a complete graph $K_n,n\ge 4$ and label its vertices as $v_1, v_2, v_3, ..., v_n$. By Theorem \ref{T-PDKN1}, $d^p_{K_n}(v)$, where $v \in V(K_n)= \sum\limits_{i=1}^{n-2}i$, for all $v_i \in V(K_n)$. Hence, without loss of generality, we determine $d^p_{K_n}(v_1)$, thereafter $d^p_{K_n - v_1}(v_2)$, thereafter $d^p_{K_n - \{v_1, v_2\}}(v_3)$ and so on until we obtain the triangle on vertices $v_{n-2}, v_{n-1}, v_n$, which has $h(K_3) = 1$. Clearly, the summation of this iterative procedure equals the total number of primitive holes, $h(K_n)$. Hence, the result $h(K_n) = \sum\limits_{i=3}^{n}d^p_{K_i}(v)$, holds.
\end{proof}

A natural and relevant question in this context is whether there is a relation between the primitive hole number of a graph $G$ and the primitive degree of its vertices. The following theorem gives an answer to this question.

\begin{thm}
For a simple connected graph $G$, we have $h(G)=\frac{1}{3}\sum\limits_{v \in V(G)} d^p_{G}(v)$.
\end{thm}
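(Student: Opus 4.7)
The plan is a standard double-counting argument, applied to the incidence relation between vertices and primitive holes. Let $\mathcal{T}$ denote the set of primitive holes (triangles) in $G$, so that $|\mathcal{T}| = h(G)$. I would consider the set of incidence pairs
\[
S = \{(v,t) : v \in V(G),\ t \in \mathcal{T},\ v \text{ is a vertex of } t\}
\]
and compute $|S|$ in two different ways.

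First I would count $|S|$ by grouping the pairs according to their vertex coordinate. For a fixed $v \in V(G)$, the number of triangles $t \in \mathcal{T}$ that contain $v$ is precisely $d^p_G(v)$ by the definition of the primitive degree. Summing over all vertices yields
\[
|S| = \sum_{v \in V(G)} d^p_G(v).
\]

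Next I would count $|S|$ by grouping the pairs according to their triangle coordinate. For a fixed $t \in \mathcal{T}$, the triangle $t$ is a copy of $C_3$ and hence contains exactly three vertices, each of which contributes one incidence pair. Therefore
\[
|S| = 3 \cdot |\mathcal{T}| = 3\,h(G).
\]
Equating the two expressions for $|S|$ and dividing by $3$ gives the desired identity $h(G) = \tfrac{1}{3}\sum_{v \in V(G)} d^p_G(v)$.

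There is no real obstacle here; the only subtlety worth flagging is the verification that each triangle contributes exactly three incidences (i.e.\ that the three vertices of a primitive hole are genuinely distinct), which is immediate from the fact that $C_3$ is a simple cycle on three vertices. Consequently the argument requires no case analysis and depends on none of the earlier structural results in the paper.
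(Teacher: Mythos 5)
Your proposal is correct and is essentially the same argument as the paper's own proof, which also observes that each primitive hole is counted exactly three times (once for each of its three vertices) when summing the primitive degrees; your version merely formalizes this as an explicit double count of vertex--triangle incidence pairs.
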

\begin{proof}
Consider any simple connected graph $G, h(G) \ge 0$. If $h(G) = 0$, it implies that $d^p_{G}(v) = 0,\forall v \in V(G)$. Hence, the result holds. If $h(G)\ge 1$, say $h(G) = t$, then label the primitive holes $K^1_3, K^2_3, K^3_3, ..., K^t_3,$ respectively. Each primitive hole $K^i_3,1 \leq i \leq t$ in $G$ has the vertices say, $u, v, w$. In determining $d^p_{G}(u), d^p_{G}(v)$ and $d^p_{G}(w)$, the primitive hole $K^i_3$ is counted three times. Since this triple count applies to all primitive holes the result, we have $h(G) = \frac{1}{3}\sum d^p_{G}(v),\forall v \in V(G)$.
\end{proof}

Invoking the results on the primitive hole number of complete graphs and on the primitive degree of the vertices of those complete graph, we have the following theorem

\begin{thm}
For the Jaco graph $J_n(1),n\ge 5$ having the Jaconian vertex $v_i$, we have $h(J^{\ast}_n(1)) = \binom{n-i}{3} + \sum\limits(d^+_{J_n(1)}(v_j)-1)$, where $d^+_{J_n(1)}(v_j)\ge 2; j\leq i$.
\end{thm}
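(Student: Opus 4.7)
The plan is to split the primitive holes of $J^{\ast}_n(1)$ into two disjoint classes and count each separately: (A) those whose three vertices all lie in the Hope subgraph $\mathbb{H}_n(1) = \{v_{i+1}, v_{i+2}, \ldots, v_n\}$, and (B) those containing at least one vertex $v_j$ with $j \le i$. Every primitive hole of $J^{\ast}_n(1)$ falls into exactly one of these classes, so $h(J^{\ast}_n(1)) = |A| + |B|$, and the task reduces to identifying each term with a summand of the claimed formula.

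For class (A), the Hope subgraph is by definition the complete graph $K_{n-i}$ on $n-i$ vertices, so Proposition \ref{P-PHN-KN} immediately gives $|A| = \binom{n-i}{3}$. This produces the first summand of the formula without further effort.

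For class (B), I would invoke the preceding theorem of this section, which expresses $h(J^{\ast}_n(1))$ as $\sum(d^+_{J_n(1)}(v_j) - 1)$ taken over all $v_j$ with $d^+(v_j) \ge 2$. Splitting this sum at $j = i$ and observing that the $j > i$ portion corresponds to primitive holes already accounted for in class (A), the remaining class-(B) contribution collapses to $\sum_{j \le i,\, d^+(v_j) \ge 2}(d^+_{J_n(1)}(v_j) - 1)$. Adding the two class counts then yields the claimed identity. The step I expect to be the main obstacle is verifying that the preceding-theorem sum truly splits along the Hope/non-Hope boundary, i.e., that the $j > i$ contributions sum to exactly $\binom{n-i}{3}$ and correspond bijectively to Hope triangles. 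The key structural fact I would use is that in any Jaco graph the out-neighbourhood of every vertex $v_j$ is a clique (a consequence of the monotonicity of the reach function $M_j = 2j - d^-(v_j)$ in $j$, so $(v_a, v_b) \in A$ whenever $j < a < b \le M_j$). This monotonicity lets one attribute each primitive hole unambiguously to its smallest-indexed vertex and then verify that this attribution respects the partition into class (A) and class (B), closing the argument.
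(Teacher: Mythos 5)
Your partition of the primitive holes into those lying entirely inside the Hope subgraph and those meeting $\{v_1,\ldots,v_i\}$ is sound, and the class-(A) count $\binom{n-i}{3}$ is correct since the Hope subgraph is complete; the paper's own proof is only a two-line sketch (apply the complete-graph primitive-degree theorem over $v_1,\ldots,v_i$, then $h(K_{n-i})=\binom{n-i}{3}$ to the Hope graph), and your decomposition is in the same spirit. The gap is in the step where you split the sum of the preceding theorem at $j=i$. The identity you would need, $\sum_{j>i,\,d^+(v_j)\ge 2}\bigl(d^+_{J_n(1)}(v_j)-1\bigr)=\binom{n-i}{3}$, is false in general: for $J_9(1)$ the prime Jaconian vertex is $v_5$, the Hope graph is $K_4$ with $\binom{4}{3}=4$ triangles, but the $j>5$ portion of that sum is $(3-1)+(2-1)=3$.

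The deeper problem is exposed by your own key structural fact. Since $N^+(v_j)$ is a clique, the number of primitive holes whose smallest-indexed vertex is $v_j$ is $\binom{d^+_{J_n(1)}(v_j)}{2}$, not $d^+_{J_n(1)}(v_j)-1$; the two agree only when $d^+(v_j)\le 2$. Carried through, your partition yields $h(J^{\ast}_n(1))=\binom{n-i}{3}+\sum_{j\le i}\binom{d^+_{J_n(1)}(v_j)}{2}$, which contradicts the stated formula as soon as some $v_j$ with $j\le i$ has out-degree at least $3$. This already happens at $n=7$: there $i=4$ and $d^+_{J_7(1)}(v_4)=3$; direct enumeration (and the recursive theorem of Section 3) gives $h(J^{\ast}_7(1))=5$, while the stated formula gives $\binom{3}{3}+(2-1)+(3-1)=4$. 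So the failing step is not a technicality: the preceding theorem you lean on, and the statement itself, appear to be false for $n\ge 7$, and your approach, executed carefully, disproves rather than proves the claimed identity. The correct conclusion of your argument is the $\binom{d^+}{2}$ formula above.
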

\begin{proof}
For the Jaco graph $J^{\ast}_n(1), n\ge 5$ having the Jaconian vertex $v_i$, we apply Theorem \ref{T-PDKN1} over vertices $v_1, v_2, v_3,\ldots, v_i$ followed by applying Proposition \ref{P-PHN-KN} to the Hope graph $\mathbb{H}(J^{\ast}_n(1))$.
\end{proof}

\section{Conclusion}
We have discussed the primitive hole number of graphs and primitive degree of vertices of certain simple connected graphs. The study seems to be promising as it can be extended to certain standard graph classes and certain graphs that are associated with the given graphs. Determining the primitive degree of a vertex of the total graph of a graph $G$ is an open problem. More problems in this area are still open and hence there is a wide scope for further studies. 

\section*{Acknowledgements}

The authors gratefully acknowledge the comments and suggestions of Mrs. K. P. Chithra, wife of the second author, which improved the overall presentation of the content of this paper.

\end{document}